\newtheoremstyle{mytheoremstyle} 
    {5pt}                    
    {5pt}                    
    {\itshape}                   
    {\parindent}                           
    {\bf}                   
    {.}                          
    {.5em}                       
    {}  
\theoremstyle{mytheoremstyle}
\newtheorem{theorem}{Theorem}[section]
\newtheorem{lemma}[theorem]{Lemma}
\newtheorem{proposition}[theorem]{Proposition}
\newtheorem{corollary}[theorem]{Corollary}
\newtheoremstyle{mytdefintionstyle} 
    {5pt}                    
    {5pt}                    
    {\rm}                   
    {\parindent}                           
    {\bf}                   
    {.}                          
    {.5em}                       
    {}  
\theoremstyle{remark}
\newtheorem{remark}[theorem]{Remark}
\theoremstyle{mytdefintionstyle}
\newtheorem{definition}[theorem]{Definition}
\newtheorem{example}[theorem]{Example}
\newtheorem{construction}[theorem]{Construction}
\newtheoremstyle{exmp_contd} 
{\topsep} {\topsep}%
{\upshape}
{}
{\bfseries}
{}
{ }
{\thmname{#1}\,\thmnumber{ #2}\thmnote{#3}\enspace(continued)}
\theoremstyle{exmp_contd}
\definecolor{ExQ}{HTML}{0000FF}
\definecolor{Dec}{HTML}{E07B00}
\newcommand{\CapPkg}{\textsc{Cap}\xspace}
\newcommand{\pmatrow}[2]{ \begin{pmatrix}{#1} & {#2} \end{pmatrix} }
\newcommand{\pmatcol}[2]{ \begin{pmatrix}{#1} \\ {#2} \end{pmatrix} }
\newcommand{\LT}{\mathrm{LT}}
\newcommand{\N}{\mathbb{N}}
\newcommand{\Z}{\mathbb{Z}}
\newcommand{\tr}{\mathrm{tr}}
\DeclareMathOperator{\cokernel}{\mathrm{coker}}
\DeclareMathOperator{\domain}{\mathrm{ann}}
\newcommand{\fpmod}{\text{-}\mathrm{fpmod}}
\tikzset{round left paren/.style={ncbar=0.5cm,out=120,in=-120}}
\tikzset{round right paren/.style={ncbar=0.5cm,out=60,in=-60}}
\newcolumntype{C}[1]{>{\centering\arraybackslash$}p{#1}<{$}}
\newlength{\mycolwd}
\definecolor{lightgray}{gray}{0.8}
\newcolumntype{L}{>{\raggedleft}p{0.28\textwidth}}
\newcolumntype{R}{p{0.8\textwidth}}
\definecolor{ctcolor}{gray}{0.95}
\definecolor{ctucolor}{gray}{0.85}
\newcommand{\thickhline}{%
    \noalign {\ifnum 0=`}\fi \hrule height 1pt
    \futurelet \reserved@a \@xhline
}
\newcolumntype{"}{@{\hskip\tabcolsep\vrule width 1pt\hskip\tabcolsep}}
\author{Sebastian Posur}
\thanks{The author is supported by Deutsche Forschungsgemeinschaft (DFG) grant SFB-TRR 195: \emph{Symbolic Tools in Mathematics and their Application}}
\address{Department of mathematics, University of Siegen, 57068 Siegen, Germany}
\email{\href{mailto:Sebastian Posur <sebastian.posur@uni-siegen.de>}{sebastian.posur@uni-siegen.de}}
\begin{document}

\title[Linear systems over localizations of rings]{Linear systems over localizations of rings}

\begin{abstract}
We describe a method for solving linear systems
over the localization of a commutative ring $R$
at a multiplicatively closed subset $S$
that works under the following hypotheses:
the ring $R$ is coherent, i.e.,
we can compute finite generating sets of row syzygies of matrices over $R$,
and there is an algorithm
that decides for any given finitely generated ideal $I \subseteq R$
the existence of an element $r$
in $S \cap I$
and in the affirmative case computes $r$ as a concrete linear combination
of the generators of $I$.
\end{abstract}

\keywords{%
Computable ring, coherent strongly discrete ring, linear system, localization%
}
\subjclass[2010]{%
13B30, 
13P20
}
\maketitle


\section{Introduction}
The concept of rings equipped with algorithms
for dealing with linear systems
is fundamental
in constructive algebra \cite{CMS}.
A ring $R$ is called coherent if we have an algorithm
computing a finite generating set of the row syzygies of
a given matrix over $R$.
Moreover, $R$ is called
computable or coherent strongly discrete if we have
an algorithm for finding a particular solution of an inhomogeneous linear system over $R$.
Computable rings
provide the basis for an effective categorical framework for
homological algebra \cite{BL, PosFreyd, homalg-project}

In this paper we will address the following problem:
when is the localization $S^{-1}R$ of a coherent commutative ring $R$
at a multiplicatively closed subset $S \subseteq R$ computable?
With the investigation of this problem
we wish to contribute to the powerful framework
developed by Barakat and Lange-Hegermann in \cite{BL}
that renders the abelian category
of finitely presented $R$-modules constructive whenever a ring $R$
is known to be computable.

In \cite[Section 2.8.8]{GP}
it is shown how to solve linear equations
over $S_{>}^{-1}k[x_1, \dots, x_n]$, i.e.,
the polynomial ring
over a computable field $k$ in $n \in \N_0$ indeterminates
localized at the multiplicatively closed subset $S_{>}$
which consists of polynomials having leading monomial equal to $1$ for a given monomial ordering $>$.
In particular, this yields the computability of the localization 
$k[x_1, \dots, x_n]_{\langle x_{\ell+1}, \dots, x_n  \rangle} = S^{-1}_{>}k(x_1, \dots, x_{\ell})[ x_{\ell+1}, \dots, x_n ]$
by choosing a local ordering on $x_{\ell+1}, \dots, x_n$ for $\ell \in \N_0$
(see \cite[Example 1.5.3.4]{GP}).

The computability 
of the localization at a finitely generated maximal ideal $\mathfrak{m} \subset R$
of a computable ring $R$
is established by Barakat and Lange-Hegermann in \cite[Section 4]{BL}. Their algorithm avoids the computation of standard bases
over a local ordering in the special case $R = k[x_1, \dots, x_n]$ and $\mathfrak{m} = \langle x_1, \dots, x_n \rangle$.

In this paper we describe a general method for solving linear systems over $S^{-1}R$
for those coherent rings $R$ and multiplicatively closed subsets $S \subseteq R$
that can be equipped with the following extra datum: an algorithm
that decides for $\ell \in \N_0$ and any given finitely generated ideal $I = \langle f_1, \dots, f_{\ell} \rangle \subseteq R$
the existence of an element
in $S \cap I$
and in the affirmative case computes $a_1, \dots, a_{\ell} \in R$
such that $\sum_{i=1}^{\ell} a_i f_i \in S \cap I$.
From this general method, we can deduce the computability of $R_{\mathfrak{p}}$
for the localization of a computable ring $R$ at a finitely generated prime ideal
$\mathfrak{p}$ (Corollary \ref{corollary:localizations_at_prime_ideals}).

The paper is structured as follows.
After introducing the notion of coherent and computable rings
in Section \ref{section:computable_rings} we discuss
linear systems over $S^{-1}R$ in Section \ref{section:solving}.
The idea of our method for solving such systems is a generalization of the following observation: given a matrix $A \in R^{m \times n}$
and a row $b \in R^{1 \times n}$,
the existence of a solution $x \in R^{1 \times m}$ of the linear system $xA = b$
is equivalent to $1 \in \domain_R( [b]_A )$,
where $\domain_R( [b]_A )$ denotes the annihilator of $b$ regarded
as an element in $\cokernel( R^{1 \times m} \stackrel{A}{\longrightarrow} R^{1 \times n} )$.
We generalize this criterion to the localized case: if $A$ and $b$ are considered over $S^{-1}R$,
then a solution $x$ exists over $S^{-1}R$ if and only if 
$\domain_R( [b]_A ) \cap S$ is inhabited, i.e.,
there is an element $r$ in this intersection 
(Lemma \ref{lemma:dom}).
In the affirmative case a solution over $S^{-1}R$ can be constructed 
from a concrete expression of $r$ as a linear combination of a special set of generators of $\domain_R( [b]_A )$.
This will yield the computability of $S^{-1}R$ (Theorem \ref{theorem:computability}).

In the last Section \ref{section:examples}
we give some examples for our method. In particular,
we can describe how to find particular solutions of inhomogeneous
linear systems over 
\[
 (k[x_1, \dots, x_n]/I)_{\mathfrak{p}}
\]
\emph{without} the usage of Mora's tangent cone algorithm,
where $I$ is an ideal of $k[x_1, \dots, x_n]$ and
$\mathfrak{p}$ is a prime ideal of $k[x_1, \dots, x_n]/I$.

\section{Computable rings}\label{section:computable_rings}

In this paper $R$ will always denote a commutative unital ring.
Recall that $R$ is called \textbf{coherent} if
it comes equipped with an algorithm for computing syzygies:
\begin{enumerate}
 \item Given a matrix $A \in R^{m \times n}$ for $m,n \in \N_0$,
       we can find an  $o \in \N_0$ and a matrix $L \in R^{o \times m}$
       such that $LA = 0$. Furthermore, $L$ is universal with this property
       in the sense that for every other $p \in \N_0$ and matrix $T \in R^{p \times m}$
       such that $TA = 0$, there exists a $U \in R^{p \times o}$ such that
       $UL = T$.\label{enumitem1}
\end{enumerate}

\begin{remark}\label{remark:existential_quantifier}
 Following a fully constructive reading of this definition,
 the quantifier claiming the existence of $U$ for given $A$ and $T$
 also has to be realized algorithmically.
 We will come back to this very important point
 in Remark \ref{remark:come_back} and Remark \ref{remark:weak_kernel_lift}.
\end{remark}

Following \cite{BL} we call a coherent ring $R$ \textbf{computable}
if it additionally comes equipped with an algorithm for computing lifts:
\begin{enumerate}
  \setcounter{enumi}{1}
  \item Given two matrices $A \in R^{m \times n}$ and $B \in R^{q \times n}$,
        we can decide whether there exists a matrix $X \in R^{q \times m}$
        such that $XA = B$, and in the affirmative case construct such an $X$.
        We call $X$ a \textbf{lift of $B$ along $A$}.\label{enumitem2}
\end{enumerate}
In constructive algebra computable rings are also known as \textbf{coherent strongly discrete rings} \cite{CMS}.

We will refer to statement \eqref{enumitem1} as the \textbf{syzygy problem for $R$}.
Statement \eqref{enumitem2} is called the \textbf{lifting problem for $R$},
since it can be nicely rephrased as follows:
if we interpret the matrices $A$ and $B$ as $R$-module homomorphisms between free modules
$R^{1 \times m} \rightarrow R^{1 \times n}$ and $R^{1 \times q} \rightarrow R^{1 \times n}$, respectively, then we ask
whether the diagram 
\begin{center}
    \begin{tikzpicture}[label/.style={postaction={
          decorate,
          decoration={markings, mark=at position .5 with \node #1;}},
          mylabel/.style={thick, draw=none, align=center, minimum width=0.5cm, minimum height=0.5cm,fill=white}}]
          \coordinate (r) at (3.5,0);
          \coordinate (u) at (0,2);
          \node (n) {$R^{1 \times q}$};
          \node (m) at ($(n) - (u)$) {$R^{1 \times n}$};
          \node (q) at ($(m) + (r)$) {$R^{1 \times m}$};
          \draw[->,thick] (n) --node[left]{$B$} (m);
          \draw[->,thick] (q) --node[above]{$A$} (m);
    \end{tikzpicture}
  \end{center}
admits a lift, i.e., if there exists a module homomorphism $X$
making the diagram
\begin{center}
    \begin{tikzpicture}[label/.style={postaction={
          decorate,
          decoration={markings, mark=at position .5 with \node #1;}},
          mylabel/.style={thick, draw=none, align=center, minimum width=0.5cm, minimum height=0.5cm,fill=white}}]
          \coordinate (r) at (3.5,0);
          \coordinate (u) at (0,2);
          \node (n) {$R^{1 \times q}$};
          \node (m) at ($(n) - (u)$) {$R^{1 \times n}$};
          \node (q) at ($(m) + (r)$) {$R^{1 \times m}$};
          \draw[->,thick] (n) --node[left]{$B$} (m);
          \draw[->,thick] (q) --node[above]{$A$} (m);
          \draw[->,thick,dashed] (n) --node[above]{$X$} (q);
    \end{tikzpicture}
  \end{center}
commutative, and in the affirmative case we ask for a specific instance of such an $X$.

\begin{remark}\label{remark:come_back}
 Turning the existential quantifier in the definition of a coherent ring
 algorithmic as proposed in Remark \ref{remark:existential_quantifier}
 can be seen as a special instance of the lifting problem,
 namely finding a solution of
 \[
  XL = T.
 \]
 In actual implementations of computable rings it is advisable
 to have separate algorithms for the special and
 the general lifting problem,
 since in the special case we can benefit from additional knowledge.
 For example, if we deal with matrices over the polynomial ring,
 the syzygy matrix $L$ could have been already computed as a Gröbner basis with respect to 
 the induced ordering, and this knowledge turns the special lifting problem
 into a simple reduction
 (see Remark \ref{remark:weak_kernel_lift} for another example).
 Note that at the moment, there is no interface
 in the $\mathtt{homalg}$-project \cite{homalg-project} for the special lifting problem in computable rings\footnote{ 
 However, for a fixed ordering $<$ in the current session, 
 $\mathtt{homalg}$ matrices over polynomial rings that are created
 as Gröbner bases w.r.t.\ $<$ store this knowledge, and utilize it whenever appropriate. }.
 
 From a categorical point of view,
 being coherent for a ring means
 the existence of so-called weak kernels in the
 category of row modules (see \cite{PosFreyd}). In this light
 having a complete set of algorithms dealing with weak kernels,
 including their weak kernel lifts,
 appears very natural.
\end{remark}

\section{Solving linear systems over localizations of rings}\label{section:solving}

Let $R$ be a unital commutative ring equipped with a multiplicatively closed subset $S \subseteq R$,
i.e., $1 \in S$ and $r,s \in S$ implies $rs \in S$.
In this section we investigate the computability of the localization $S^{-1}R$
of $R$ at $S$.

Elements in $R$ give rise to elements in the localization via the not necessarily injective natural map
$R \rightarrow S^{-1}R: r \mapsto \frac{r}{1}$.
When we write $\frac{A}{d} \in S^{-1}R^{m \times n}$
we mean that $A$ is a matrix in $R^{m \times n}$,
$d \in S$, and $\frac{A}{d} = \left(\frac{A_{ij}}{d}\right)_{ij}$.
Note that every matrix with entries in $S^{-1}R$ has such
a representation by choosing a common denominator and representatives in $R$.

The solvability of the syzygy problem for $S^{-1}R$ is easy.

\begin{lemma}\label{lemma:syzygy_problem}
 If we can solve the syzygy problem for $R$,
 then the same is true for $S^{-1}R$.
\end{lemma}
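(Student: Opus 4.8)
The plan is to compute row syzygies over $S^{-1}R$ by reducing to row syzygies over $R$. Given $\frac{A}{d} \in S^{-1}R^{m\times n}$ with $A \in R^{m\times n}$ and $d \in S$, I would first observe that $d$ becomes invertible in $S^{-1}R$, so a row $x \in S^{-1}R^{1\times m}$ satisfies $x\cdot\frac{A}{d} = 0$ if and only if $x\cdot\frac{A}{1} = 0$; hence it suffices to describe the row syzygies of $\frac{A}{1}$. Using the hypothesis, I compute $o \in \N_0$ and a universal row syzygy $L \in R^{o\times m}$ of $A$ over $R$, and claim that $\frac{L}{1} \in S^{-1}R^{o\times m}$ is a universal row syzygy of $\frac{A}{d}$ over $S^{-1}R$.

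One inclusion is immediate: $LA = 0$ in $R^{o\times n}$ yields $\frac{L}{1}\cdot\frac{A}{1} = \frac{LA}{1} = 0$ in $S^{-1}R^{o\times n}$. For universality I would take an arbitrary $p \in \N_0$ and a row syzygy $\frac{T}{e} \in S^{-1}R^{p\times m}$ of $\frac{A}{1}$, with $T \in R^{p\times m}$ and $e \in S$. From $\frac{TA}{e} = \frac{T}{e}\cdot\frac{A}{1} = 0$ and the definition of equality in the localization we obtain some $s \in S$ with $s\,TA = 0$ in $R^{p\times n}$, i.e.\ $(sT)A = 0$ over $R$. Universality of $L$ over $R$ now provides $U \in R^{p\times o}$ with $UL = sT$, and over $S^{-1}R$ the matrix $\frac{U}{se}$ satisfies $\frac{U}{se}\cdot\frac{L}{1} = \frac{UL}{se} = \frac{sT}{se} = \frac{T}{e}$. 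Hence every row syzygy of $\frac{A}{d}$ over $S^{-1}R$ factors through $\frac{L}{1}$, which proves the claim.

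The only delicate point — the reason the statement is merely ``easy'' rather than trivial — is the constructive realization of the universality quantifier in the sense of Remark \ref{remark:existential_quantifier}: to turn the factorization above into an algorithm one must, given $\frac{T}{e}$, actually exhibit a witness $s \in S$ with $s\,TA = 0$ and then invoke the algorithmic form of the universal property of $L$ over $R$, which is a special lifting problem over $R$. I would therefore state the conclusion so that this reduction is explicit: after clearing the denominator $e$, lifting along the syzygy matrix $\frac{L}{1}$ over $S^{-1}R$ reduces to the corresponding special lifting problem over $R$ discussed in Remark \ref{remark:come_back}.
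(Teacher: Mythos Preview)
Your proof is correct and follows the same route as the paper: compute a syzygy matrix $L$ of $A$ over $R$ and show that $\frac{L}{1}$ solves the syzygy problem for $\frac{A}{d}$ over $S^{-1}R$. The paper's proof invokes exactness of the localization functor in one line, whereas you unpack that exactness by hand; your explicit verification of universality (find $s\in S$ with $sTA=0$, lift $sT$ through $L$, divide by $se$) is precisely the content the paper supplies in the remark immediately following the lemma.
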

\begin{proof}\hspace{-0.5em}\footnote{
 This proof can also be found in \cite[Lemma 4.3]{BL}. It had a typo in the exact sequence that had been communicated to the authors and now is fixed (v5).}
 Let $\frac{A}{d} \in S^{-1}R^{m \times n}$ and let
 $L \in R^{o \times m}$ be a solution of the syzygy problem of $A \in R^{m \times n}$.
 This is equivalent to $R^{1 \times o} \stackrel{L}{\longrightarrow} R^{1 \times m} \stackrel{A}{\longrightarrow} R^{1 \times n}$ being an exact sequence.
 Applying the exact localization functor $(\frac{-}{1})$ yields an exact sequence
 which proves that $\frac{L}{1}$ is a solution of the syzygy problem of $\frac{A}{1}$
 and thus, by the invertibility of $d$, also of $\frac{A}{d}$.
\end{proof}

\begin{remark}\label{remark:weak_kernel_lift}
 The proof of Lemma \ref{lemma:syzygy_problem} actually hides how we can find
 for given $\frac{T}{d'} \in S^{-1}R^{p \times m}$
 with $\frac{T}{d'} \cdot \frac{A}{1} = 0$
 a matrix $\frac{U}{d''} \in S^{-1}R^{p \times o}$
 such that
 \[
  \frac{U}{d''} \cdot \frac{L}{1} = \frac{T}{d'},
 \]
 so, we will briefly explain how it can be done.
 First, since $\frac{T}{d'} \cdot \frac{A}{1} = 0$ we can find an $s \in S$ such that $sTA = 0$.
 Next, since $L$ consists of row syzygies of $A$,
 we have a $U \in R^{p \times o}$ such that $UL = sT$.
 It follows that
 \[
  \left(\frac{U}{d's}\right) \cdot \frac{L}{1} = \frac{T}{d'}.
 \]
 Thus, we could quite easily establish a matrix $\frac{U}{d's}$ that solves a special
 instance of the lifting problem.
\end{remark}

Now, we turn to the general lifting problem for $S^{-1}R$.
Let $\frac{A}{d_A} \in S^{-1}R^{m \times n}, \frac{B}{d_B} \in S^{-1}R^{q \times n}$,
and $\frac{X}{d} \in S^{-1}R^{q \times m}$ for $m,n,q \in \N_0$.
Since
\[
 \frac{X}{d} \cdot \frac{A}{d_A} = \frac{B}{d_B} ~\Longleftrightarrow~ \frac{d_A d_B X}{d} \cdot \frac{A}{1} = \frac{B}{1},
\]
finding a lift of $\frac{B}{d_B}$ along $\frac{A}{d_A}$
is equivalent to finding a lift of $\frac{B}{1}$ along $\frac{A}{1}$,
which in turn is equivalent to finding lifts $\frac{X_i}{d_i} \in S^{-1}R^{1 \times m}$
for all rows of $\frac{B}{1}$ along $\frac{A}{1}$, i.e., $\frac{X_i}{d_i} \cdot \frac{A}{1} = \frac{B_{i,-}}{1}$ for $i = 1, \dots, q$.
Thus, it suffices to deal with the case $q = 1$ in which $B$ is a row vector (from now on we will call it $b$)
and the diagram of the simplified lifting problem is given by
\begin{center}
    \begin{tikzpicture}[baseline = (base),label/.style={postaction={
          decorate,
          decoration={markings, mark=at position .5 with \node #1;},
          },
          mylabel/.style={thick, draw=none, align=center, minimum width=0.5cm, minimum height=0.5cm,fill=white}}]
          \coordinate (r) at (3.5,0);
          \coordinate (u) at (0,2);
          \node (n) {$S^{-1}R^{1 \times 1}$};
          \node (m) at ($(n) - (u)$) {$S^{-1}R^{1 \times n}$};
          \node (q) at ($(m) + (r)$) {$S^{-1}R^{1 \times m}$.};
          \node (base) at ($0.5*(n) + 0.5*(m)$) {};
          \draw[->,thick] (n) --node[left]{$\frac{b}{1}$} (m);
          \draw[->,thick] (q) --node[above]{$\frac{A}{1}$} (m);
    \end{tikzpicture}
\end{center}

The key to the lifting problem lies in the following definition.
\begin{definition}\label{definition:domain}
  For a matrix $A \in R^{m \times n}$ and a row $b \in R^{1 \times n}$ we set
   \[
    \domain_{R}( [b]_A ) := \{ r \in R ~|~ \exists x \in R^{1 \times m}: xA = rb \} \subseteq R.
   \]
  \end{definition}

Note that $\domain_{R}( [b]_A )$ can also be described as the annihilator
of $b$ regarded as an element in $\cokernel( R^{1 \times m} \stackrel{A}{\longrightarrow} R^{1 \times n} )$.
In particular, $\domain_{R}( [b]_A )$ is an ideal of $R$.

\begin{remark}\label{remark:liftingproblem_ann}
 Whether a lift of $b$ along $A$ exists can be read off from $\domain_{R}( [b]_A )$:
 \[
  \left(\exists x \in R^{1 \times m}: xA = b\right) ~\Longleftrightarrow~ 1 \in \domain_{R}( [b]_A ).
 \]
\end{remark}

The last remark generalizes to the localized case.

\begin{lemma}\label{lemma:dom}
  Given $A \in R^{m \times n}$ and $b \in R^{1 \times n}$.
  Then there exists a lift $\frac{x}{d} \in S^{-1}R^{1 \times m}$ such that $\frac{x}{d} \cdot \frac{A}{1} = \frac{b}{1}$
  if and only if there exists an element $r \in \domain_{R}( [b]_A ) \cap S$.
\end{lemma}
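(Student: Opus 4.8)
The plan is to prove the two implications separately, working directly from Definition \ref{definition:domain} and the definition of equality of fractions in $S^{-1}R$. The only genuinely delicate point is that the natural map $R \to S^{-1}R$ need not be injective, so an equality of fractions over $S^{-1}R$ must be unwound to an equality over $R$ \emph{up to multiplication by an element of $S$}; once that is handled, both directions are one-line computations. This lemma is simply the localized analogue of Remark \ref{remark:liftingproblem_ann} (the case $S = \{1\}$), so I expect the proof to be short.

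For the direction ``$\Leftarrow$'', I would start from an element $r \in \domain_{R}( [b]_A ) \cap S$. By Definition \ref{definition:domain} there is a row $x \in R^{1 \times m}$ with $xA = rb$. Since $r \in S$, the fraction $\frac{r}{1}$ is a unit in $S^{-1}R$, so I may form $\frac{x}{r} \in S^{-1}R^{1 \times m}$ and compute
\[
 \frac{x}{r} \cdot \frac{A}{1} = \frac{xA}{r} = \frac{rb}{r} = \frac{b}{1},
\]
which exhibits $\frac{x}{r}$ as a lift of $\frac{b}{1}$ along $\frac{A}{1}$ (with denominator $d = r$).

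For the direction ``$\Rightarrow$'', suppose $\frac{x}{d} \cdot \frac{A}{1} = \frac{b}{1}$ for some $x \in R^{1 \times m}$ and $d \in S$, that is, $\frac{xA}{d} = \frac{b}{1}$ in $S^{-1}R^{1 \times n}$. By the definition of equality in the localization there is some $s \in S$ with $s \cdot (1 \cdot xA - d \cdot b) = 0$ in $R^{1 \times n}$, i.e.\ $(sx)A = (sd)b$. Setting $x' := sx \in R^{1 \times m}$ and $r := sd$, we have $r \in S$ because $S$ is multiplicatively closed, and the relation $x'A = rb$ witnesses $r \in \domain_{R}( [b]_A )$. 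Hence $r \in \domain_{R}( [b]_A ) \cap S$, as required.

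The ``main obstacle'' is thus not really an obstacle but a bookkeeping subtlety: one must not conflate $\frac{xA}{d} = \frac{b}{1}$ with the (generally false) statement $xA = db$ over $R$, and must carry the auxiliary factor $s \in S$ through the argument. Everything else reduces to invertibility of denominators and closure of $S$ under multiplication.
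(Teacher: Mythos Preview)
Your proof is correct. Both directions are handled cleanly, and you have correctly identified and dealt with the only subtle point, namely that equality of fractions in $S^{-1}R$ only yields an equality in $R$ after multiplication by a suitable $s \in S$ (one common $s$ for all $n$ entries, obtained as the product of the entrywise witnesses).

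Your approach differs from the paper's. The paper argues more abstractly: it first invokes Remark~\ref{remark:liftingproblem_ann} over $S^{-1}R$ to reduce the existence of a lift to the condition $1 \in \domain_{S^{-1}R}\big([\tfrac{b}{1}]_{\tfrac{A}{1}}\big)$, then uses exactness of the localization functor to identify this annihilator with the extension $S^{-1}\domain_R([b]_A)$, and finally uses the standard fact that an extended ideal $S^{-1}I$ contains $1$ if and only if $I \cap S$ is inhabited. Your argument bypasses all of this by unwinding the definitions directly. The paper's route has the advantage of making transparent the conceptual reason behind the lemma (localization is exact, so annihilators localize), while your route is more self-contained and, importantly, makes the construction of the lift $\tfrac{x}{r}$ from a witness $r \in \domain_R([b]_A) \cap S$ completely explicit --- which is exactly what is exploited in the algorithmic part of the paper following the lemma.
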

\begin{proof}
  By Remark \ref{remark:liftingproblem_ann} a lift of $\frac{b}{1}$ along $\frac{A}{1}$
  exists if and only if
  $1 \in \domain_{S^{-1}R}\left( [\frac{b}{1}]_{\frac{A}{1}} \right)$.
  From the exactness of the localization functor, 
  we get $\langle \frac{a}{1} \mid a \in \domain_R(b,A) \rangle_{S^{-1}R} = \domain_{S^{-1}R}\left( [\frac{b}{1}]_{\frac{A}{1}} \right)$.
  Now, the claim follows from the fact that 
  for any ideal $I \subseteq R$,
  we have
  $1 \in \langle \frac{a}{1} \mid a \in I \rangle_{S^{-1}R}$
  if and only if there exists an element $r \in I \cap S$. 
%
%
\end{proof}

We turn to the case where $R$ is a coherent ring.

\begin{construction}\label{construction:dom}
 If $R$ is a coherent ring,
 then $\domain_{R}( [b]_A )$ can be constructed as follows.
 First, we find an $o \in \N_0$ and a solution $L \in R^{o \times (m+1)}$ of the syzygy problem
 \[
 X
 \cdot
 \begin{pmatrix}
  b \\
  A
 \end{pmatrix}
 = 0.
\]
Next, we decompose this solution $L$ as follows:
\[
 L =
  \left(\begin{array}{@{}c|c@{}}
  r_1 & L_1 \\
  \vdots & \vdots \\
  r_{o} & L_{o}
  \end{array}\right),
\]
where $r_i \in R$ and $L_i \in R^{1 \times m}$ for $i = 1, \dots, o$.
Then it easily follows that
\[
 \domain_{R}( [b]_A ) = \langle r_1, \dots, r_o \rangle_R.
\]
\end{construction}
In particular, $\domain_{R}( [b]_A )$ is a finitely generated ideal.
We should not discard the $L_i$ after this computation,
since their true value lies in the construction of lifts:

\begin{lemma}\label{lemma:coherent_plus_membership_is_computable}
 A ring $R$ is computable if and only if
 \begin{enumerate}
  \item $R$ is coherent
  \item we can effectively decide $1 \in I$
        for any finitely generated ideal $I = \langle f_1, \dots, f_{\ell} \rangle_R \subseteq R$,
        i.e., construct a linear combination $a_1, \dots, a_{\ell} \in R: \sum_{i=1}^{\ell} a_i f_i = 1$
        or disprove its existence.
 \end{enumerate}
\end{lemma}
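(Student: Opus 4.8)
The plan is to prove both implications by unwinding the definitions, with the real content carried by Construction \ref{construction:dom} and the $L_i$ data it produces.

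First I would handle the easy direction: if $R$ is computable, then it is coherent by definition (the syzygy algorithm is built into the notion of computability), and ideal membership $1 \in \langle f_1, \dots, f_\ell \rangle_R$ is a special case of the lifting problem \eqref{enumitem2}. Indeed, interpret the $f_i$ as a matrix $A = (f_1, \dots, f_\ell)^{\mathrm{t}} \in R^{\ell \times 1}$ and $B = (1) \in R^{1 \times 1}$; a lift $X \in R^{1 \times \ell}$ of $B$ along $A$ is exactly a tuple $(a_1, \dots, a_\ell)$ with $\sum_i a_i f_i = 1$. Applying the lifting algorithm decides existence and, in the affirmative case, produces the concrete linear combination.

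For the converse, suppose $R$ is coherent and we can decide $1 \in I$ for finitely generated ideals together with producing a witnessing linear combination. I want to solve the lifting problem: given $A \in R^{m \times n}$ and $B \in R^{q \times n}$, decide whether $X A = B$ has a solution $X \in R^{q \times m}$ and, if so, construct one. As observed before Definition \ref{definition:domain}, it suffices to treat $q = 1$, so write $b := B \in R^{1 \times n}$ and look for $x \in R^{1 \times m}$ with $x A = b$. Now run Construction \ref{construction:dom}: using coherence, compute the syzygy matrix $L \in R^{o \times (m+1)}$ of $\bigl(\begin{smallmatrix} b \\ A \end{smallmatrix}\bigr)$, and decompose it as $L = (\,r_i \mid L_i\,)_{i=1,\dots,o}$ with $r_i \in R$, $L_i \in R^{1 \times m}$, so that $r_i b + L_i A = 0$ for each $i$ and $\domain_R([b]_A) = \langle r_1, \dots, r_o \rangle_R$. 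By Remark \ref{remark:liftingproblem_ann}, a lift of $b$ along $A$ exists if and only if $1 \in \domain_R([b]_A) = \langle r_1, \dots, r_o \rangle_R$, which the ideal-membership algorithm decides. In the affirmative case it returns $a_1, \dots, a_o \in R$ with $\sum_{i=1}^o a_i r_i = 1$; then $b = \bigl(\sum_i a_i r_i\bigr) b = \sum_i a_i (r_i b) = \sum_i a_i (-L_i A) = \bigl(-\sum_i a_i L_i\bigr) A$, so $x := -\sum_{i=1}^o a_i L_i \in R^{1 \times m}$ is the desired lift. This is exactly the point foreshadowed in the remark that "we should not discard the $L_i$": they convert a membership witness into a lift.

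The step I expect to be the main obstacle is making sure the reduction to $q = 1$ and the assembly of the row-wise lifts into a full matrix $X$ is genuinely algorithmic and that the equality $\domain_R([b]_A) = \langle r_1, \dots, r_o \rangle_R$ from Construction \ref{construction:dom} is proved carefully — in particular the inclusion $\subseteq$, where one uses the universal property of $L$ as a generating set of syzygies to realize an arbitrary relation $xA = rb$, i.e.\ $(-r \mid x)\bigl(\begin{smallmatrix} b \\ A \end{smallmatrix}\bigr) = 0$, as an $R$-linear combination of the rows of $L$, whence $r \in \langle r_1, \dots, r_o\rangle_R$. Everything else is bookkeeping: stacking the $q$ row-solutions $x^{(1)}, \dots, x^{(q)}$ yields $X \in R^{q \times m}$, and if any single row fails to lift then $B$ does not lift along $A$.
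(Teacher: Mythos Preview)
Your proposal is correct and follows essentially the same approach as the paper: the ``only if'' direction is dismissed as trivial (you spell out why), and the ``if'' direction uses Construction \ref{construction:dom} together with Remark \ref{remark:liftingproblem_ann} to produce the explicit lift $x = -\sum_{i=1}^o a_i L_i$ from a witness $\sum_i a_i r_i = 1$. The concerns you flag at the end (reduction to $q=1$, the equality $\domain_R([b]_A) = \langle r_1,\dots,r_o\rangle_R$) are exactly the ingredients the paper has already established in the surrounding text and Construction \ref{construction:dom}, so there is no gap.
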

\begin{proof}
 The ``only if'' direction is trivial, so
 we prove the ``if'' direction.
 Using the notation of Construction \ref{construction:dom},
 a solution of
 \[
  x \cdot A = b
 \]
 is simply given by
 \[
 \left(-\sum_{i=1}^o a_i L_i\right) \cdot A = b,
 \]
 where we use a linear combination $\sum_{i=1}^o a_i r_i = 1$.
\end{proof}

Our strategy for proving computability of $S^{-1}R$
is to generalize Lemma \ref{lemma:coherent_plus_membership_is_computable}.
Instead of finding a linear combination of $1$, we need to be able
to find a linear combination of an element of $S$:
\begin{definition}
Deciding whether for a given ${\ell} \in \N_0$ and finitely generated ideal $I = \langle f_1, \dots, f_{\ell} \rangle_R \subseteq R$
there exists an element in $I \cap S$, and in the affirmative case
constructing elements $a_1, \dots, a_{\ell} \in R$ such that $\sum_{i=1}^{\ell} a_i f_i \in I \cap S$
is what we call the \textbf{localization problem for $R$ at $S$}.
\end{definition}

Assume we have an algorithm solving the localization problem for $R$ at $S$.
Then we may use it to decide whether there exists an element in $\domain_{R}( [b]_A ) \cap S$
which by Lemma \ref{lemma:dom} is the case if and only if a lift of $\frac{b}{1}$ along $\frac{A}{1}$ exists.
In the affirmative case our algorithm
gives us $a_1, \dots, a_o \in R$ such that
\[
 \sum_{i=1}^o a_i r_i \in \domain_{R}( [b]_A ) \cap S,
\]
where the $r_i$ are the generators described in Construction \ref{construction:dom}.
Now, we can benefit from the already computed $L_i$ in Construction \ref{construction:dom}:
\begin{align*}
 \sum_{i=1}^o a_i \pmatrow{r_i}{L_i} \cdot \pmatcol{b}{A} = 0 \hspace{1em}
 &\Longleftrightarrow \hspace{1em} \left(\sum_{i=1}^o a_i r_i\right)b  + \left(\sum_{i=1}^o a_i L_i\right)A = 0 
\end{align*}
which in turn gives us a concrete formula for our desired lift\footnote{
If $R$ is a computable ring, then for any given $r \in \domain_{R}( [b]_A ) \cap S$
we can solve the lifting problem $xA = rb$ in $R$. It follows that $\frac{x}{r}$ is a lift of $\frac{b}{1}$ along $\frac{A}{1}$.
However, this strategy does not benefit from the already computed $L_i$
and thus can lead to slower computations.
}:
\[
 \left(\frac{\sum_{i=1}^o a_i L_i}{-\sum_{i=1}^o a_i r_i}\right) \cdot \frac{A}{1} = \frac{b}{1}.
\]
We have proven our main theorem.

\begin{theorem}\label{theorem:computability}
 Let $R$ be a coherent ring and $S \subseteq R$ a multiplicatively closed subset.
 Then $S^{-1}R$ is a computable ring
 if we can algorithmically solve the localization problem for $R$ at $S$.
\end{theorem}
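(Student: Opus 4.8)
The plan is to show that the datum of a coherent ring $R$ together with an algorithm solving the localization problem for $R$ at $S$ suffices to verify the two defining properties of a computable ring for $S^{-1}R$: the syzygy problem and the (general) lifting problem. The syzygy problem is already settled by Lemma \ref{lemma:syzygy_problem}, so the entire content is the lifting problem, and by the reduction carried out just before Definition \ref{definition:domain} it is enough to solve the simplified lifting problem $\frac{x}{d} \cdot \frac{A}{1} = \frac{b}{1}$ for a single row $b \in R^{1 \times n}$ and a matrix $A \in R^{m \times n}$ (an arbitrary lifting problem over $S^{-1}R$ reduces to this one by clearing denominators and treating the rows of $B$ separately).

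The key steps, in order, are as follows. First, given $\frac{A}{d_A}$ and $\frac{b}{d_b}$, pass to representatives over $R$ and reduce to the case $d_A = d_b = 1$ as above. Second, invoke coherence of $R$ to run Construction \ref{construction:dom}: compute a universal solution $L \in R^{o \times (m+1)}$ of the syzygy problem for $\pmatcol{b}{A}$ and decompose it into its first column $(r_1, \dots, r_o)^{\mathrm{t}}$ and the remaining block with rows $L_1, \dots, L_o \in R^{1 \times m}$, so that $\domain_R([b]_A) = \langle r_1, \dots, r_o \rangle_R$. Third, feed the finite generating set $r_1, \dots, r_o$ to the algorithm solving the localization problem for $R$ at $S$: it decides whether $\domain_R([b]_A) \cap S$ is inhabited, which by Lemma \ref{lemma:dom} is exactly the decidability half of the lifting problem over $S^{-1}R$ — if the answer is negative, report that no lift exists. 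Fourth, in the affirmative case the algorithm returns $a_1, \dots, a_o \in R$ with $\sum_{i=1}^o a_i r_i \in S$; using that each row $\pmatrow{r_i}{L_i}$ annihilates $\pmatcol{b}{A}$, the combination $\sum_i a_i \pmatrow{r_i}{L_i}$ also annihilates it, which unwinds to $\bigl(\sum_i a_i r_i\bigr) b + \bigl(\sum_i a_i L_i\bigr) A = 0$. Dividing by the unit $-\sum_i a_i r_i \in S$ of $S^{-1}R$ exhibits the explicit lift
\[
 \left(\frac{\sum_{i=1}^o a_i L_i}{-\sum_{i=1}^o a_i r_i}\right) \cdot \frac{A}{1} = \frac{b}{1},
\]
which after reintroducing $d_A, d_b$ solves the original lifting problem. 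All of this has been assembled in the discussion preceding the theorem, so the proof mainly consists of citing Lemma \ref{lemma:syzygy_problem}, the reduction to a single row, Construction \ref{construction:dom}, Lemma \ref{lemma:dom}, and the displayed formula.

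The only genuine subtlety — and the step I would be most careful about — is the algorithmic realization of the existential quantifier hidden in the notion of coherence, flagged in Remark \ref{remark:existential_quantifier} and Remark \ref{remark:come_back}: to legitimately claim that $S^{-1}R$ is \emph{computable} (not merely that lifts exist), one must also be able to solve special lifting problems over $S^{-1}R$, i.e., lift along the syzygy matrix $\frac{L}{1}$. But Remark \ref{remark:weak_kernel_lift} already supplies this: given $\frac{T}{d'}$ with $\frac{T}{d'} \cdot \frac{A}{1} = 0$, clear denominators to get $s \in S$ with $sTA = 0$, use universality of $L$ over $R$ to obtain $U$ with $UL = sT$, and then $\frac{U}{d's}$ is the desired lift. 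So the general lifting capability of $S^{-1}R$ descends from that of $R$ together with the localization-problem algorithm, and no further obstruction arises; the proof is therefore essentially a bookkeeping of the preceding results, and I would present it in two or three lines referring back to them.
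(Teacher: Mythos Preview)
Your proposal is correct and follows exactly the approach of the paper: the theorem is stated immediately after the discussion assembling Lemma \ref{lemma:syzygy_problem}, the reduction to a single row, Construction \ref{construction:dom}, Lemma \ref{lemma:dom}, and the explicit lift formula, with the one-line ``proof'' being that all of this has already been done. Your extra paragraph on the special lifting problem (Remark \ref{remark:weak_kernel_lift}) is more explicit than the paper, but harmless --- and in any case subsumed by the general lifting algorithm you have just established.
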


\section{Examples}\label{section:examples}

Of special importance in algebraic geometry
are localizations of rings at prime ideals.

\begin{corollary}\label{corollary:localizations_at_prime_ideals}
 Let $R$ be a computable ring with a finitely generated 
 prime ideal $\mathfrak{p} = \langle p_1, \dots, p_m \rangle \subseteq R$
 for $m \in \N_0$.
 Then $R_{\mathfrak{p}} = S^{-1}R$
 is a computable ring, where
 $S := R - \mathfrak{p}$.
\end{corollary}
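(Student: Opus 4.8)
The plan is to apply Theorem \ref{theorem:computability}, so the only thing to verify is that a computable ring $R$, equipped with the complement $S := R - \mathfrak{p}$ of a finitely generated prime ideal, can solve the localization problem for $R$ at $S$. That is, given a finitely generated ideal $I = \langle f_1, \dots, f_\ell \rangle_R$, we must decide whether $I \cap S \neq \emptyset$ and, if so, produce a linear combination $\sum_{i=1}^\ell a_i f_i$ lying in $I \cap S$.

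First I would observe the following reformulation: since $S = R - \mathfrak{p}$ and $\mathfrak{p}$ is prime, an element $r \in R$ lies in $S$ precisely when $r \notin \mathfrak{p}$. Hence $I \cap S \neq \emptyset$ if and only if $I \not\subseteq \mathfrak{p}$, which (because $I$ is generated by the $f_i$) is equivalent to the existence of some index $i$ with $f_i \notin \mathfrak{p}$. So the decision part reduces to testing, for each generator $f_i$ of $I$, whether $f_i \in \mathfrak{p}$, i.e. whether $f_i \in \langle p_1, \dots, p_m \rangle_R$. This is exactly an ideal membership test, which is available because $R$ is computable: by Lemma \ref{lemma:coherent_plus_membership_is_computable} a computable ring can decide $1 \in J$ for any finitely generated ideal $J$, and the standard trick of testing membership $f \in \langle p_1, \dots, p_m \rangle_R$ via deciding $1 \in \langle 1 \rangle$ inside the appropriate setup — more directly, membership in a finitely generated ideal is itself an instance of the lifting problem $x \cdot (p_1, \dots, p_m)^{T}\text{-matrix} = f$, which item \eqref{enumitem2} solves. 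In the affirmative case the lifting algorithm even returns the coefficients expressing $f_i$ in terms of the $p_j$, though for the localization problem we only need the yes/no answer plus, in the "no" case for some $i$, the trivial linear combination $1 \cdot f_i + \sum_{j \neq i} 0 \cdot f_j = f_i \in S$.

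Putting this together: run the membership test for $f_1, \dots, f_\ell$ in turn. If every $f_i \in \mathfrak{p}$, then $I \subseteq \mathfrak{p}$, so $I \cap S = \emptyset$ and we report that no element of $I \cap S$ exists. If instead some $f_{i_0} \notin \mathfrak{p}$, then $f_{i_0} \in I \cap S$, and we output the coefficients $a_i = \delta_{i,i_0}$, so that $\sum_{i=1}^\ell a_i f_i = f_{i_0} \in I \cap S$. This solves the localization problem for $R$ at $S$, and Theorem \ref{theorem:computability} then yields that $R_{\mathfrak{p}} = S^{-1}R$ is computable.

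I do not expect any real obstacle here; the statement is essentially a corollary packaging the general theorem together with the elementary fact that membership in a prime ideal of the generators of $I$ controls whether $I$ meets the multiplicative complement. The only point requiring a word of care is the reduction of "does $I$ meet $S$" to "is some generator $f_i$ outside $\mathfrak{p}$", which uses primality of $\mathfrak{p}$ in the direction that $I \subseteq \mathfrak{p}$ already follows from all generators lying in $\mathfrak{p}$ (this direction needs no primality), together with the fact that an element of $S$ inside $I$ exists as soon as one generator is in $S$ — so in fact primality of $\mathfrak{p}$ is only needed to guarantee that $S$ is multiplicatively closed in the first place, which is part of the hypothesis setup for Theorem \ref{theorem:computability}.
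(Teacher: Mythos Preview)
Your proposal is correct and follows essentially the same route as the paper: reduce to the localization problem via Theorem \ref{theorem:computability}, observe that $I \cap S \neq \emptyset$ iff some generator $f_i \notin \mathfrak{p}$, and decide this by the lifting problem $X \cdot (p_1,\dots,p_m)^{\tr} = f_i$ available in a computable ring. Your added remark that primality is only needed to make $S$ multiplicatively closed is a nice observation not made explicit in the paper; the slightly tangled aside about Lemma \ref{lemma:coherent_plus_membership_is_computable} and ``deciding $1 \in \langle 1 \rangle$'' is unnecessary and should simply be dropped in favor of the direct lifting formulation you give right after.
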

\begin{proof}
 By Theorem \ref{theorem:computability}
 we need to show how to solve the localization problem of $R$ at $S$.
 Given ${\ell} \in \N_0$ and a finitely generated ideal $I = \langle f_1, \dots, f_{\ell} \rangle \subseteq R$,
 then
 \begin{align*}
  \exists r \in I \cap S &\Longleftrightarrow \exists r \in I - \mathfrak{p} \\
  &\Longleftrightarrow \exists i \in \{1, \dots, {\ell}\}: f_i \not\in \mathfrak{p}.
 \end{align*}
 So, all we have to do is to test whether $f_i \in \mathfrak{p}$,
 which is equivalent to solving
 \[
  X \cdot 
  \begin{pmatrix}
  p_1 \\
  \vdots \\
  p_m
  \end{pmatrix}
  =
  f_i.
 \]
\end{proof}

\begin{remark}
 Let $k$ be a computable field, $n \in \N_0$,
 $I \subseteq k[x_1, \dots, x_n]$ an ideal.
 We set $R := k[x_1, \dots, x_n]/I$.
 Then 
 Corollary \ref{corollary:localizations_at_prime_ideals}
 gives us an algorithm to solve linear systems over
 \[
 \left(k[x_1, \dots, x_n]/I\right)_{\mathfrak{p}}
 \]
 for prime ideals $\mathfrak{p} \subseteq R$,
 since $R$ is a computable ring by means of Gröbner bases.
 In particular, we do not need the computation of a standard basis
 over a \emph{local} monomial ordering by means
 of the tangent cone algorithm \cite{MoraLocalAlgebra}.
\end{remark}

Since the localization of a polynomial ring at a prime ideal is a very interesting special case for computer algebra,
we discuss it at length in the following construction.

\begin{construction}\label{construction:polynomial_ring}
 Let $k$ be a computable field and $R := k[ x_1, \dots, x_n ]$ the polynomial ring in $n \in \N_0$
 indeterminates. 
 Let $\mathfrak{p} \subset R$ be a prime ideal with generators $p_1, \dots, p_m$ for $m \in \N_0$.
 Given a linear system 
 \[
  X \cdot \frac{A}{1} = \frac{b}{1}
 \]
 over $R_{\mathfrak{p}}$,
 where
 $\frac{A}{1} \in R_{\mathfrak{p}}^{m \times n}$ and $\frac{b}{1} \in R_{\mathfrak{p}}^{1 \times n}$ for $m,n \in \N_0$,
 we can find a solution (or disprove its existence) as follows:
 \begin{enumerate}
  \item Find a solution $L \in R^{o \times (m+1)}$ of the syzygy problem
 \[
 X
 \cdot
 \begin{pmatrix}
  b \\
  A
 \end{pmatrix}
 = 0.
\]
 This can be done with Gröbner basis techniques \cite[Algorithm 2.5.4]{GP},
 e.g., by computing a Gröbner basis of the rows of
 \[
  \left(\begin{array}{@{}c|c@{}}
  b & \text{\multirow{2}{*}{$I_{m+1}$}} \\
  A &
  \end{array}\right)
\]
 with a monomial ordering giving priority to the components in the left block.
 \item For $i=1, \dots, o$, let $\pmatrow{r_i}{L_i}$ denote the $i$-th row of $L$,
 where $r_i \in R$, $L_i \in R^{1 \times m}$.
 We check if $r_i \in \mathfrak{p}$
 with an algorithm\footnote{Such an algorithm needs
 a Gröbner basis of $\mathfrak{p}$. So, if we need to solve many different
 linear systems over the same ring $R_{\mathfrak{p}}$, then the determination
 of such a Gröbner basis can be seen as a preprocessing step.} for ideal membership \cite[Section 1.8.1]{GP}.
 The first $i$ such that $r_i \notin \mathfrak{p}$
 gives us the desired solution 
 \[
 \left(-\frac{L_i}{r_i}\right)
 \cdot
 \frac{A}{1}
 = \frac{b}{1}.
\]
 If there is no such $i$, then we successfully disproved the existence of a solution.
 \end{enumerate}
\end{construction}

The author believes that it is worth to implement
Construction \ref{construction:polynomial_ring}
for two reasons:
first, it covers localizations at all prime ideals (with given finite set of generators)
and therefore generalizes the work of Barakat and Lange-Hegermann in \cite{BL},
in which computability of a ring is sufficient to provide
a whole framework for effective homological algebra.
Second, it is not a priori clear how Construction \ref{construction:polynomial_ring}
performs in practice compared to the algorithm described in \cite[Proposition 4.5]{BL}:

\begin{proposition}[Barakat, Lange-Hegermann]
 Let $R$ be a computable ring with a maximal ideal $\mathfrak{m} = \langle m_1, \dots, m_{\ell} \rangle$ for ${\ell} \in \N_0$.
 A linear system 
 \[
  X \cdot \frac{A}{1} + \frac{b}{1} = 0
 \]
 over $R_{\mathfrak{m}}$,
 where
 $\frac{A}{1} \in R_{\mathfrak{m}}^{m \times n}$ and $\frac{b}{1} \in R_{\mathfrak{m}}^{1 \times n}$ for $m,n \in \N_0$,
 has a solution if and only if 
 the following linear system over $R$ has a solution:
 \begin{equation*}\label{equation:barakat_lange-hegermann}
   X \cdot
  \left(\begin{array}{@{}c@{}}
  A \\
  \left( m_1, \dots, m_{\ell} \right)^{\tr} \cdot b
  \end{array}\right)
  + b = 0.
 \end{equation*}
\end{proposition}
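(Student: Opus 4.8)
The plan is to translate each of the two solvability statements into a condition on the ideal $\domain_R([b]_A)$ from Definition \ref{definition:domain}, and then to reconcile the two conditions using the maximality of $\mathfrak{m}$.

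First I would unwind the matrix equation over $R$. Writing $X = \bmatrow{x}{y}$ with $x \in R^{1 \times m}$ and $y \in R^{1 \times \ell}$, and observing that the $j$-th row of $(m_1,\dots,m_\ell)^{\tr}\cdot b$ is $m_j b$, the system becomes
\[
 xA + \Bigl(\textstyle\sum_{j=1}^{\ell} y_j m_j\Bigr) b + b = 0,
\]
i.e.\ $xA = -sb$ with $s := 1 + \sum_{j=1}^{\ell} y_j m_j$. As $y$ ranges over $R^{1 \times \ell}$, the element $\sum_j y_j m_j$ ranges over all of $\mathfrak{m} = \langle m_1,\dots,m_\ell\rangle$, so $s$ ranges over $1 + \mathfrak{m}$. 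Hence the system over $R$ has a solution if and only if there is an $s \in 1 + \mathfrak{m}$ and an $x$ with $xA = -sb$, which (taking $-x$ as witness) is exactly the condition $\domain_R([b]_A) \cap (1+\mathfrak{m}) \neq \emptyset$.

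For the system over $R_{\mathfrak{m}}$, note that $X \cdot \frac{A}{1} + \frac{b}{1} = 0$ is the same as $X \cdot \frac{A}{1} = \frac{-b}{1}$, and that $\domain_R([-b]_A) = \domain_R([b]_A)$; so Lemma \ref{lemma:dom}, applied with $b$ replaced by $-b$, shows that this system is solvable if and only if $\domain_R([b]_A) \cap S \neq \emptyset$, where $S := R \setminus \mathfrak{m}$. It therefore remains to prove that for the ideal $I := \domain_R([b]_A)$ one has $I \cap (1+\mathfrak{m}) \neq \emptyset$ precisely when $I \cap S \neq \emptyset$, i.e.\ precisely when $I \not\subseteq \mathfrak{m}$. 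One direction is immediate from $1 + \mathfrak{m} \subseteq R \setminus \mathfrak{m}$ (if $1 + u \in \mathfrak{m}$ with $u \in \mathfrak{m}$, then $1 \in \mathfrak{m}$, impossible). For the converse I would pick $r \in I$ with $r \notin \mathfrak{m}$; maximality of $\mathfrak{m}$ gives $\langle r \rangle + \mathfrak{m} = R$, so $ar = 1 - u$ for suitable $a \in R$, $u \in \mathfrak{m}$, whence $ar \in I \cap (1+\mathfrak{m})$ because $I$ is an ideal.

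The argument is short, and there is no serious obstacle; the one step that needs care — and the only point where the hypothesis that $\mathfrak{m}$ is \emph{maximal}, rather than merely prime, is genuinely used — is the final passage from an element of $I$ lying outside $\mathfrak{m}$ to an element of $I$ lying in $1 + \mathfrak{m}$. One should also mind the sign when invoking Lemma \ref{lemma:dom}, since that lemma is stated for $\frac{x}{d}\cdot\frac{A}{1} = \frac{b}{1}$ rather than for the inhomogeneous form $X \cdot \frac{A}{1} + \frac{b}{1} = 0$ used in the statement.
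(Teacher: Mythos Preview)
Your argument is correct: the unwinding of the augmented $R$-system to the condition $\domain_R([b]_A)\cap(1+\mathfrak{m})\neq\emptyset$, the application of Lemma~\ref{lemma:dom} for the localized system, and the final use of maximality to pass between $R\setminus\mathfrak{m}$ and $1+\mathfrak{m}$ are all sound, and you rightly flag the sign and the place where maximality (rather than primality) enters.

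There is, however, nothing in the paper to compare your proof against. This proposition is quoted from \cite[Proposition~4.5]{BL} and is stated in the present paper without proof; it serves only as a point of reference to contrast with Construction~\ref{construction:polynomial_ring}. Your approach is nonetheless very much in the spirit of the paper, since it recasts both solvability conditions through the ideal $\domain_R([b]_A)$ of Definition~\ref{definition:domain} and Lemma~\ref{lemma:dom}, which is precisely the device the paper develops.
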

In the case $R = k[x_1, \dots, x_n]$ the costly part in solving this linear system
involves the computation of a Gröbner basis of
\[
 \left(\begin{array}{@{}c@{}}
  A \\
  \left( m_1, \dots, m_{\ell} \right)^{\tr} \cdot b
  \end{array}\right),
\]
whereas the costly part in Construction \ref{construction:polynomial_ring}
is the computation of syzygies. Testing Construction \ref{construction:polynomial_ring}
for the applications described in \cite{BL} would be an interesting project.

We end this section with two more examples.
\begin{example}
 Let $R$ be a computable commutative ring, and let $L = \langle h_1, \dots, h_m \rangle_R \subseteq R$
 be an ideal for $m \in \N_0$.
 Then $S := 1 + L$ is a multiplicatively closed set, and $S^{-1}R$
 the Zariskification of $A$ at $L$.
 For any finitely generated ideal $I = \langle f_1, \dots, f_{\ell} \rangle_R \subseteq R$
 with ${\ell} \in \N_0$,
 we have
 \begin{align*}
  \exists r \in S \cap I  \Longleftrightarrow \exists r_1, \dots, r_m, r'_1, \dots, r'_{\ell} \in R: 1 = \sum_{i=1}^m r_ih_i + \sum_{i=1}^{\ell} r'_i f_i.
 \end{align*}
 Since $R$ is computable, we can effectively solve this equation.
 Thus, Theorem \ref{theorem:computability} implies computability of the Zariskification.
\end{example}

\begin{example}
 Let $R$ be a computable commutative ring. For a polynomial $p = \sum_{i=0}^d a_it^i \in R[t]$
 where $d \in \N_0$, $a_i \in R$ for $i = 0, \dots d$, and $a_d \not= 0$ we define the leading term as
 $\LT( p ) = a_dt^d$.
 Such a polynomial is called monic if $a_d = 1$.
 Now, consider the multiplicatively closed subset
 \[
  S := \{ p \in R[t] \mid p \text{~is monic} \} \subseteq R[t]
 \]
 and the localization $R(t) := S^{-1}R[t]$.
 Let $I \subseteq R[t]$ be a finitely generated ideal
 with standard basis $G = \{g_1, \dots, g_n\} \subseteq I$ for $n \in \N_0$,
 i.e.,
 \[
  \langle \LT( g ) \mid g \in G \rangle_{R[t]} = \langle \LT( f ) \mid f \in I \rangle_{R[t]} \eqqcolon \LT(I).
 \]
 We write $\LT( g_i ) = c_i \cdot t^{d_i}$ for $c_i \in R$ and $d_i \in \N_0$.
 Then
 \begin{align*}
  \exists r \in I \cap S &\Longleftrightarrow \exists m \in \N_0: t^m \in \LT(I) \\
  &\Longleftrightarrow \exists m \in \N_0 ~ \exists a_i \in R: \sum_{i \in \{ 1 \dots n \mid m - d_i \geq 0\}} (a_i t^{m-d_i})(c_i t^{d_i}) = t^m \\
  &\Longleftrightarrow \exists a_i \in R: \sum_{i=1}^n a_i c_i = 1
 \end{align*}
 Since $R$ is computable, we can solve this last equation
 and if $(a_1, \dots, a_n)$ is such a solution,
 then
 \[
  \left(\sum_{i = 1}^n a_i t^{M-d_i}g_i\right) \in S \cap I
 \]
 where $M \geq \max\{d_1, \dots, d_n\}$.
Thus, whenever we can compute a standard basis of finitely generated ideals in $R[t]$ (e.g., if $R = \Z$),
then $R(t)$ is a computable ring.
See \cite[Chapter 4]{AL} for details on Gröbner bases over polynomial rings with coefficients
in a commutative noetherian ring.
\end{example}

\section{Outlook}

Computations within the algorithmic model of
the abelian category of finitely presented modules
$S^{-1}R\fpmod$ over a localized ring $S^{-1}R$
as implemented in the $\mathtt{homalg}$-project
are capable of outperforming
equivalent methods based on Mora's algorithm (see \cite[Section 6]{BL}).
However, the implementation in $\mathtt{homalg}$ is limited
to the case of localizations of computable rings $R$ at
maximal ideals $\mathfrak{m}$.
The methods described in this paper make it possible 
to model the categories $S^{-1}R\fpmod$ on the computer
for rings beyond $R_{\mathfrak{m}}$.
Their implementation is planned
within \CapPkg \cite{CAP-project},
a software project facilitating the implementation of category theory
based constructions. For example, $S^{-1}R\fpmod$ can be categorically constructed as the so-called Freyd category of row modules
over $S^{-1}R$ (see \cite{PosFreyd}).

Yet another drawback of the implementation of $R_{\mathfrak{m}}\fpmod$ in $\mathtt{homalg}$
is the dependency on Mora's algorithm 
for the computation of Hilbert series (see \cite[Remark 4.8]{BL}).
However, at least for modules of finite length,
as they appear for example in computations of intersection multiplicities,
we can get rid of this dependency by using
a purely categorical description of the filtration of a module induced by $\mathfrak{m}$.

The idea of a category theory based alternative approach to localization in computer algebra
can even be taken one step further:
instead of localizing the ring $R$, we can localize the whole category $R\fpmod$
in the sense of Serre quotients. This localization is again algorithmic \cite{BL_GabrielMorphisms}
and provides a framework for the category of coherent sheaves on quasi-affine schemes,
a proper generalization of $S^{-1}R\fpmod$.

\def\cprime{$'$} \def\cprime{$'$} \def\cprime{$'$} \def\cprime{$'$}
  \def\cprime{$'$}
\providecommand{\bysame}{\leavevmode\hbox to3em{\hrulefill}\thinspace}
\providecommand{\MR}{\relax\ifhmode\unskip\space\fi MR }
\providecommand{\MRhref}[2]{%
  \href{http://www.ams.org/mathscinet-getitem?mr=#1}{#2}
}
\providecommand{\href}[2]{#2}

\end{document}